			\newtheorem{theorem}{Theorem}[section]
			\newtheorem{lemma}[theorem]{Lemma}
			\newtheorem{remark}[theorem]{Remark}
    \newcommand{\R}{\mathbb R}    	 
\newcommand{\minmn}{m\wedge n}  
\begin{document}

\title{Generalized Zernike polynomials: Integral representation and Cauchy transform}
\thanks{A. Ghanmi and A. Intissar are partially supported by the Hassan II Academy of Sciences and Technology.}
\author{A. El Hamyani}  \email{amalelhamyani@gmail.com}
\author{A. Ghanmi}     \email{ag@fsr.ac.ma}
\author{A. Intissar}   \email{intissar@fsr.ac.ma};\email{ahmedintissar@gmail.com} 
\maketitle
\address{P.D.E. and Spectral Geometry,
          Laboratory of Analysis and Applications - URAC/03,
          Department of Mathematics, P.O. Box 1014,  Faculty of Sciences,
          Mohammed V University, Rabat, Morocco}

\begin{abstract}
The aim of this paper is two fold. We derive an integral representation for the generalized 2D Zernike polynomials which are of
independent interest and give the explicit expression of the action of the Cauchy transform on them.
\end{abstract}


\section{Introduction} 

The disk polynomials in the two conjugate complex variables $z$ and $\overline{z}$ in the unit disk $D$ of the complex plane, up to a multiplicative constant, are defined by the Rodrigues' formula
  \begin{align}\label{ZernikePol}
  \mathcal{Z}_{m,n}^\gamma (z,\bar z)
   = (-1)^{m+n}  (1- |z|^2)^{-\gamma} \frac{\partial^{m+n}}{\partial z^m \partial \bar z^n}\left( (1- |z|^2)^{\gamma+m+n} \right).
  \end{align}
This definition agrees with W\"unsche \cite{Wunsch05} ($ \displaystyle P_{m,n}^{\gamma}(z, \bar z)$), Dunkl \cite{Dunkl83,Dunkl84} ($ \displaystyle R_{m,n}^{(\gamma)}(z)$)  and up to standardization with Koornwinder \cite{Koornwinder75,Koornwinder78}, being indeed
$$\mathcal{Z}_{m,n}^\gamma (z,\bar z)  = (\gamma+1)_{m+n} \overline{P_{m,n}^{\gamma}(z, \bar z)}= (\gamma+1)_{m+n} \overline{R_{m,n}^{(\gamma)}(z)} .$$

They form a complete orthogonal system (basis) over the Hilbert space $L^{2,\gamma}(D):=L^2(D;(1-|z|^2)^\gamma dxdy)$, where $\gamma>-1$, and are often referred to as generalized (or 2D) Zernike polynomials. Indeed, for $\gamma=0$ and $m\leq n$, the $\mathcal{Z}_{m,n}^\gamma$ turn out to be related to the real Zernike polynomials  $R^\nu_k(x)$ introduced by Zernike in his framework on optical problems involving telescopes and microscopes, to wit
$$\mathcal{Z}_{m,n}^0 (z,\bar z) =   (m+n)! e^{i[(n-m)\arg z]}  R^{n-m}_{m+n}(\sqrt{z\bar z}),$$
which play an important role in expressing the wavefront data in optical tests 
and in the study of diffraction problems (see \cite{ZernikeBrinkman35}).

An accurate analysis of the basic analytic properties of $ \displaystyle \mathcal{Z}_{m,n}^\gamma (z,\bar z)$, like recurrence relations with respect to the indices $m$ and $n$, differential equations they might obey, some generating functions and so on, have been developed in various papers from different point of views. See \cite{Koornwinder75,Koornwinder78} by Koornwinder for a very nice account on these polynomials and \cite{Wunsch05} for an elegant reintroduction of them. Recently, a Wiener type theorem and a Paley type theorem for the disk polynomial expansions have been obtained by Kanjin in \cite{Kanjin2013}.
More recently, new operational formulae of Burchnall type and generating functions are obtained in \cite{Aharmim2015}.

The purpose of the present paper is to first derive a new integral representation for the generalized 2D Zernike polynomials, involving a modified Blaschke function restricted to the unit circle. Second, we  give the explicit expression of the Cauchy transform
\begin{align*}
\left[C_{\gamma} (f) \right](z) &= \dfrac{1}{\pi} \int_{D}\dfrac{f(w)}{w-z}  \left(1-|{z}|^{2}\right)^{\gamma}dxdy
\end{align*}
 on the generalized Zernike polynomials. More precisely, we show the following
  \begin{align}
\left[ C_{\gamma} (\mathcal{Z}_{m,n}^\gamma) \right] (z) &=
   \left(1-|z|^2\right)^{\gamma+1} \mathcal{Z}_{m,n-1}^{\gamma+1}(z,\bar{z}) ; \quad m\geq n.
\end{align}

The remaining sections are organized as follows. In Section 2, we review the Schr\"odinger's algebraic method that we use to generate the disk polynomials $\mathcal{Z}_{m,n}^\gamma (z,\bar z)$. In Section 3, we review some basic analytic properties and establish an integral representation for this class of polynomials. Section 4 is devoted to the action of the Cauchy transform $C_{\gamma}$ on the $\mathcal{Z}_{m,n}^\gamma$.

\section{Generation of the generalized Zernike polynomials}
We consider the magnetic Laplacian (called also twisted Laplacian)
\begin{eqnarray*}
 \mathfrak{L}_\nu  = -(1 - |z|^2)^2\frac {\partial^2} {\partial z\partial \bar z}
 - \nu (1 - |z|^2)\left(z\frac {\partial}{\partial z}- {\bar z}\frac{\partial}{\partial{\bar z}}\right) + \nu^2 |z|^2; \quad |z|<1,
\end{eqnarray*}
acting on the $L^2$-Hilbert space $L^2(D ;(1-|z|^2)^{-2}dxdy)$, where $D$ is the unit disk in the complex plane.
It can be seen as a magnetic Schr\"odinger operator with respect to the hyperbolic geometry of the disc $D$ provided by the hyperbolic metric $ds^2 := (1 - |z|^2)^{-2}dz\otimes d\bar z$ and associated to the vector potential given by
$$i\nu \theta(z) = i\nu (\partial -\overline{\partial}) \log (1-|z|^2)
= \dfrac{-i\nu (\bar z dz - z d\bar z) }{1-|z|^2}  = \dfrac{-2\nu (y dx - x dy) }{1-x^2-y^2}  .$$
 Indeed, we have
\begin{eqnarray*}
 \mathfrak{L}_\nu = (d+i\nu ext(\theta))^{*}(d+i\nu ext(\theta)),
\end{eqnarray*}
where $d$ and $ext(\theta)$ are respectively the differential operator and the exterior multiplication by the differential $1$-form $\theta$. The adjoint operation is taken with respect to the Hermitian scalar product on compactly supported differential forms
\begin{align*} \label{sp}
(\alpha,\beta):=\int_{D}\alpha\wedge\star \beta,
\end{align*}
 where $\star $ is the Hodge star operator canonically associated with the hyperbolic metric on the unit disc.
Thus, $ \mathfrak{L}_\nu$ is an elliptic self-adjoint second order differential operator and their spectral properties are well known in the literature \cite{Zhang92,GhIn2005}. Its discrete $L^2$-spectrum is nontrivial if and only if $\nu > 1/2$. It is given through the eigenvalues
$$\lambda_{\nu,m}:=\nu(2m+1) - m(m+1)$$
for varying positive integer $m$ such that $0\leq m< \nu -1/2$.

By considering the first order differential operator $\nabla_\alpha$ and its formal adjoint
$\nabla_\alpha^{*}$ given by
\begin{align*}
\nabla_\alpha = -(1 - |z|^2)\frac{\partial}{\partial z} + \alpha \bar z \quad \mbox{and} \quad
\nabla_\alpha^{*}  = (1 - |z|^2)\frac{\partial}{\partial \bar z} + (\alpha +1) z,
\end{align*}
we can factorize the twisted Laplacian $\mathfrak{L}_\nu$ as (\cite{FV2001,Gh08OM})
 \begin{align*}
\mathfrak{L}_\nu  = \nabla_{\nu}^{*}  \nabla_{\nu}  - \nu =     \nabla_{\nu-1}  \nabla_{\nu-1}^{*}  + \nu.
 \end{align*}
Therefore, we have the following algebraic relationship 
  \begin{align*}
\mathfrak{L}_\nu \nabla_{\nu-1} = \left(\nabla_{\nu-1}  \nabla_{\nu-1}^{*} + \nu \right) \nabla_{\nu-1}
                               = \nabla_{\nu-1} \left(  \mathfrak{L}_{\nu -1} +(2\nu-1) \right),
 \end{align*}
so that the first order differential operator $\nabla_{\nu-1}$ allows one to generate eignefunctions of $\mathfrak{L}_{\nu}$ from those of $\mathfrak{L}_{\nu -1}$. More generally, if $\varphi_{0}$ is a nonzero $L^2$-eigenfunction associated to the lowest eigenvalue of $\mathfrak{L}_{\nu - m}$, then
$$\nabla^{\nu}_{m}\varphi_{0}= \nabla_{\nu-1} \circ \nabla_{\nu-2}\circ \cdots \circ \nabla_{\nu - m}\varphi_{0}$$
 is an $L^2$-eigenfunction of $\mathfrak{L}_\nu $.
 Moreover, for fixed $\nu>\frac{1}{2}$ and varying $n=0,1,2, \cdots $, the functions
 \begin{align} 
  \psi_{m,n}^{\nu}(z,\bar z):= \nabla^{\nu}_{m} \left(z^n(1 - |z|^2)^{\nu-m} \right)
\end{align}
  are $L^2$-eigenfunctions of $\mathfrak{L}_\nu$. Furthermore, they constitute an orthogonal basis of the $L^2$-eigenspace
$$ A^{2,\nu}_m(D):=\big\{\varphi \in L^2(D ; (1 - |z|^2)^{-2} dxdy);  ~~ \mathfrak{L}_\nu  \varphi = \left(\nu(2m+1) - m(m+1)\right) \varphi  \big\}.$$

The suggested class of two variable polynomials are
\begin{align*}
\mathcal{Z}_{m,n}^\gamma (z,\bar z) : &= (\gamma+m+1)_n(1 - |z|^2)^{-\nu  +m}\nabla^{\nu}_{m} \left( z^n (1 - |z|^2)^{\nu-m}\right)\\
&=(-1)^{m+n}\left(1-\mid z\mid^{2}\right)^{-\gamma}\dfrac{\partial^{m+n}}{\partial z^{m}\partial \overline{z}^{n}}\left(\left(1-|z|^{2}\right)^{\gamma+m+n}\right),
\end{align*}
where $\gamma=2(\nu-m)-1$.
Their explicit expression are given by
 \begin{align*}
   \mathcal{Z}_{m,n}^{\gamma}(z,\bar{z})
 =  m!n!  (\gamma+1)_{m+n} \sum_{j=0}^{\minmn }
    \frac{(-1)^{j} (1 -  |z|^2)^{j}} {(\gamma+1)_j j!} \frac{\bar z^{m-j}}{(m-j)! } \frac{z^{n-j} }{(n-j)!}\,.
  \end{align*}

\section{A new integral representation for $\mathcal{Z}_{m,n}^\gamma$}

The polynomials $\mathcal{Z}_{m,n}^\gamma (z,\bar z)$ can be written in terms of the Gauss-hypergeometric function ${_2F_1}$ starting from explicit expression. Indeed, we have  (\cite[p. 137]{Wunsch05})
    \begin{align}\label{ZernikeGauss1}
   \mathcal{Z}_{m,n}^{\gamma}(z,\bar{z})  =  (\gamma+1)_{m+n} \overline{z}^m z^n
   {_2F_1}\left( \begin{array}{c} -m , -n \\ \gamma+1 \end{array}\bigg | 1-\frac 1{|z|^2} \right)=(\gamma+1)_{m+n} \overline{P_{m,n}^{\gamma}(z, \bar z)}
  \end{align}
   from which we can recover the well-known ${_2F_1}$ formula (see \cite[p. 692]{Dunkl83} or \cite[p. 535]{Dunkl84}),
     \begin{align}\label{ZernikeGauss2}
   \mathcal{Z}_{m,n}^{\gamma}(z,\bar{z})  =  \frac{\left((\gamma+1)_{m+n}\right)^2} {(\gamma+1)_{m}(\gamma+1)_{n}}\overline{z}^m z^n
   {_2F_1}\left( \begin{array}{c} -m , -n \\ -\gamma-m-n \end{array}\bigg | \frac 1{|z|^2} \right) = (\gamma+1)_{m+n}  \overline{R_{m,n}^{(\gamma)}(z)}.
  \end{align}
Their expression involves also the real Jacobi polynomials defined by $$P_{n}^{(\alpha,\beta)}(x)=\left(1-x\right)^{-\alpha}\left(1-x\right)^{-\beta}\dfrac{(-1)^{n}}{2^{n}n!}
\dfrac{d^{n}}{dx^{n}}\bigg\{\left(1-x\right)^{n+\alpha}\left(1-x\right)^{n+\beta}\bigg\}.$$
More precisely, we have (\cite{Gh08OM,Kanjin2013})
\begin{align}\label{ZernikeJacobi}
\mathcal{Z}_{m,n}^\gamma (z,\bar z) = (-1)^{m}  (m\wedge n )!(\gamma+m+1)_n |z|^{|m-n|}e^{i[(n-m)\arg z]}
\mathrm{P}^{(|m-n|,\gamma)}_{m\wedge n }(1-2 |z|^2).
\end{align}
The both expressions, \eqref{ZernikeGauss1} (or \eqref{ZernikeGauss2}) and \eqref{ZernikeJacobi}, can be used to derive integral representations for $\mathcal{Z}_{m,n}^\gamma (z,\bar z)$. However, using elementary facts we give a new integral representation which is of independent interest. Namely, we have

\begin{theorem}
The following integral representation
\begin{align} \label{IntRepZ}
\mathcal{Z}_{m,n}^{\gamma}(z,\overline{z})= \dfrac{-(\gamma+m+1)_{n} m! }{2\pi i } \left(1- |z|^{2}\right)^{-\gamma}
\oint_{\mid t\mid=1} t^{n} \dfrac{\left(1- {t\overline{z}}\right)^{\gamma+m}}{(z-t)^{m+1}}dt
\end{align}
holds for the generalized Zernike polynomials $\mathcal{Z}_{m,n}^{\gamma}(z,\overline{z})$.
\end{theorem}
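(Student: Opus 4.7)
The strategy is to combine the Rodrigues formula with a single application of Cauchy's differentiation formula. The Rodrigues formula gives
\[
\mathcal{Z}_{m,n}^{\gamma}(z,\bar z)=(-1)^{m+n}(1-|z|^{2})^{-\gamma}\,\frac{\partial^{m+n}}{\partial z^{m}\partial\bar z^{n}}\bigl((1-z\bar z)^{\gamma+m+n}\bigr),
\]
and since $\partial_{\bar z}$ treats $z$ as a constant, the innermost $n$ derivatives can be carried out explicitly.

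First I would perform the $\bar z$-differentiations. Iterating $\partial_{\bar z}(1-z\bar z)^{\alpha}=-\alpha z(1-z\bar z)^{\alpha-1}$ yields
\[
\frac{\partial^{n}}{\partial\bar z^{n}}(1-z\bar z)^{\gamma+m+n}=(-z)^{n}(\gamma+m+1)_{n}(1-|z|^{2})^{\gamma+m},
\]
so that after absorbing the sign $(-1)^{m+n}(-1)^{n}=(-1)^{m}$,
\[
\mathcal{Z}_{m,n}^{\gamma}(z,\bar z)=(-1)^{m}(\gamma+m+1)_{n}\,(1-|z|^{2})^{-\gamma}\,\frac{\partial^{m}}{\partial z^{m}}\Bigl[z^{n}(1-z\bar z)^{\gamma+m}\Bigr].
\]
At this stage $\bar z$ is frozen as a parameter and the bracketed expression is a function of a single complex variable.

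Next I would apply Cauchy's integral formula for the $m$-th derivative. Fix $z\in D$. As a function of the complex variable $t$, the map $t\mapsto t^{n}(1-t\bar z)^{\gamma+m}$ is holomorphic on a neighbourhood of the closed unit disk: its only potential singularity comes from the branch of $(1-t\bar z)^{\gamma+m}$ at $t=1/\bar z$, which lies outside $\{|t|\le 1\}$ since $|z|<1$, so a suitable principal branch is analytic on a neighbourhood of $\overline{D}$. Cauchy's formula then gives
\[
\frac{\partial^{m}}{\partial z^{m}}\Bigl[z^{n}(1-z\bar z)^{\gamma+m}\Bigr]=\frac{m!}{2\pi i}\oint_{|t|=1}\frac{t^{n}(1-t\bar z)^{\gamma+m}}{(t-z)^{m+1}}\,dt.
\]
Finally I would absorb the sign by rewriting $(t-z)^{m+1}=(-1)^{m+1}(z-t)^{m+1}$, so that the factor $(-1)^{m}\cdot(-1)^{m+1}=-1$ emerges; plugging back produces exactly the formula \eqref{IntRepZ}.

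The only delicate point is the branch-of-power issue for non-integer $\gamma+m$, which I would handle by fixing the principal branch of $(1-t\bar z)^{\gamma+m}$ on $\{|t\bar z|<1\}$ (valid on a neighbourhood of $\overline{D}$ when $|z|<1$); everything else is a direct computation. No orthogonality, no recurrence, no generating function is used; the argument is purely Rodrigues-plus-Cauchy, which is why the result is of independent interest.
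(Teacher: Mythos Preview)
Your proof is correct and follows the same overall strategy as the paper: reduce the Rodrigues formula to a single $m$-th order holomorphic derivative by carrying out the $\bar z$-differentiations first, then represent that derivative as a Cauchy integral over $|t|=1$.

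The one difference is in how the Cauchy step is executed. The paper expands $(1-z\bar z)^{\gamma+m}$ as a binomial series in $z\bar z$, applies Cauchy's formula term by term to the monomials $z^{j+n}$, and then resums the series inside the integral to recover $(1-t\bar z)^{\gamma+m}$. You instead apply Cauchy's differentiation formula directly to the holomorphic function $t\mapsto t^{n}(1-t\bar z)^{\gamma+m}$, having observed that its only branch point $t=1/\bar z$ lies outside $\overline{D}$ when $|z|<1$. Your route is shorter and in fact slightly more rigorous: the paper's expand--resum argument tacitly interchanges an infinite sum with the contour integral, which you avoid entirely, and your explicit remark on the principal branch handles the non-integer exponent cleanly. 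Conversely, the paper's termwise approach has the minor pedagogical advantage of only invoking Cauchy's formula for monomials, where no branch discussion is needed.
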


\begin{proof} For the proof, we proceed as in \cite{KazantsevBukhgeim04}.
Start from
\begin{align*}
\mathcal{Z}_{m,n}^{\gamma}(z,\overline{z})= (-1)^{m}(\gamma+m+1)_{n} \left(1-|{z}|^{2}\right)^{-\gamma}\dfrac{\partial^{m}}{\partial z^{m}}\left(z^{n}\left(1-|{z}|^{2}\right)^{\gamma+m}\right)
\end{align*}
and use the ordinary binomial expansion with the factorial function  $(1- \xi)^{-a}=\sum\limits_{j=0}^{+\infty}\dfrac{(a)_{j}}{j!} {\xi^{j}}$ 
to get
\begin{align*}
\mathcal{Z}_{m,n}^{\gamma}(z,\overline{z})= {(-1)^{m}(\gamma+m+1)_{n}} \left(1-|{z}|^{2}\right)^{-\gamma}
\sum_{j=0}^{+\infty}\dfrac{(-\gamma-m)_{j}}{j!}\dfrac{\partial^{m}}{\partial z^{m}}(z^{j+n}) {\overline{z}^{j}} .
\end{align*}
Now, since  $$\dfrac{\partial^{m}}{\partial z^{m}}(z^{j+n})=\dfrac{m!}{2\pi i}\oint_{\mid t\mid=1}\dfrac{t^{j+n}}{(t-z)^{m+1}}dt,$$
it follows
\begin{align*}
\mathcal{Z}_{m,n}^{\gamma}(z,\overline{z}) =\frac{(-1)^{m}(\gamma+m+1)_{n}m!}{2\pi i }\left(1-\mid {z} \mid^{2}\right)^{-\gamma}
\oint_{\mid t\mid=1}\dfrac{t^{n}}{(t-z)^{m+1}}\left(\sum_{j=0}^{+\infty}\dfrac{(-\gamma-m)_{j}}{j!} {t^{j}\overline{z}^{j}}\right) dt,
\end{align*}
which gives rise to \eqref{IntRepZ}.
\end{proof}

\begin{remark}
As immediate consequence, we have
\begin{align}
\mathcal{Z}_{m,n}^{\gamma}(0,0) &= (-1)^m(\gamma+m+1)_{m} m! \delta_{m,n}.
\end{align}
This can be used to recover the well known fact that $H_{m,n} (0,0) = (-1)^m m! \delta_{m,n}$ (see for example \cite[Eq (3.13) Theorem 3.3]{IsmailArxiv02-2015}) for the complex Hermite polynomials defined by
  \begin{align*}
  H_{m,n} (z,\bar z)
   := (-1)^{m+n}  e^{|z|^2} \frac{\partial^{m+n}}{\partial z^m \partial \bar z^n}\left( e^{-|z|^2}\right).
  \end{align*}
  Indeed, starting from the fact that
$$  \lim\limits_{\rho \to +\infty} \dfrac {\mathcal{Z}_{m,n}^{\rho^2}(z/\rho,\bar z/\rho)}{\rho^{m+n}}
= H_{m,n} (z,\bar z)$$
and using the Binet formula, we get
\begin{align*}
H_{m,n} (0,0) = \lim\limits_{\rho \to +\infty} \frac {\mathcal{Z}_{m,n}^{\rho^2}(0,0)}{\rho^{m+n}}
               = \lim\limits_{\rho \to +\infty} (-1)^m m! \delta_{m,n} \dfrac{\Gamma(\rho^2+2m+1)}{\rho^{2m} \Gamma(\rho^2+m+1)}
              = (-1)^m m! \delta_{m,n}.
\end{align*}
\end{remark}

\section{Cauchy transform and the generalized Zernike polynomials}

For $\gamma>-1$, we set
$$d\mu_{\gamma}(z)=\left(1-|{z}|^{2}\right)^{\gamma}dxdy,$$
where $z=x+iy$; $x,y\in \R$. We define the Cauchy transform $C_{\gamma}$ of a given $f\in L^{2}\left(D,d\mu_{\gamma}\right)$ by
\begin{align*}
[C_{\gamma}(f)](z)&=\dfrac{1}{\pi} \int_{D }\dfrac{f(w)}{w-z}d\mu_{\gamma}(w).
\end{align*}

The following result gives the explicit expression of the Cauchy transform $C_{\gamma}$ on the generalized Zernike polynomials. Namely, we assert

\begin{theorem} \label{Thm:CauchyZ} For every nonnegative integers $m\geq n$ and real $\gamma >-1$, we have
\begin{align}
\left[C_{\gamma}(\mathcal{Z}_{m,n}^{\gamma} )\right](z) = \left(1-|{z}|^{2}\right)^{\gamma+1}
\mathcal{Z}_{m,n-1}^{\gamma+1}(z,\overline{z}) .
\end{align}
\end{theorem}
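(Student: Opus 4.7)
The plan is to substitute the Rodrigues formula from Section~2 directly into the Cauchy integral and invoke the Cauchy--Pompeiu area formula exactly once. Writing $\Phi(w):=(1-|w|^{2})^{\gamma+m+n}$, the Rodrigues identity reads
$$(1-|w|^{2})^{\gamma}\,\mathcal{Z}_{m,n}^{\gamma}(w,\bar w)=(-1)^{m+n}\partial_{w}^{m}\partial_{\bar w}^{n}\Phi(w),$$
so (assuming $n\geq 1$) I would peel off a single $\partial_{\bar w}$: setting $F:=\partial_{w}^{m}\partial_{\bar w}^{n-1}\Phi$, the Cauchy transform satisfies
$$\pi\,[C_{\gamma}(\mathcal{Z}_{m,n}^{\gamma})](z)=(-1)^{m+n}\int_{D}\frac{\partial_{\bar w}F(w)}{w-z}\,dxdy,$$
and the Cauchy--Pompeiu formula rewrites the right-hand side as $\frac{1}{2i}\oint_{\partial D}F(w)/(w-z)\,dw-\pi F(z)$.

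The key step is to verify that the boundary integral vanishes. Each application of $\partial_{w}$ or $\partial_{\bar w}$ to a summand of the form $w^{j}\bar w^{k}(1-|w|^{2})^{l}$ can lower $l$ by at most one. Since $F$ is obtained from $(1-|w|^{2})^{\gamma+m+n}$ by exactly $m+n-1$ derivatives, every summand of $F$ retains a factor $(1-|w|^{2})^{l}$ with $l\geq\gamma+1$. As $\gamma>-1$, this forces $F\equiv 0$ on $\partial D$ and kills the boundary contour integral. This is the only place where the hypothesis $\gamma>-1$ is used.

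What is left is to identify the pointwise value $F(z)=\partial_{z}^{m}\partial_{\bar z}^{n-1}\Phi(z)$. Applying the Rodrigues formula a second time, now at parameter $\gamma+1$ with indices $(m,n-1)$, and noting that $(\gamma+1)+m+(n-1)=\gamma+m+n$ so the weight being differentiated is again $\Phi$, I would read off
$$F(z)=(-1)^{m+n-1}(1-|z|^{2})^{\gamma+1}\mathcal{Z}_{m,n-1}^{\gamma+1}(z,\bar z).$$
Collecting the signs $(-1)^{m+n}$ from the first Rodrigues step, $(-1)^{m+n-1}$ from the second, and $-1$ from the Cauchy--Pompeiu contribution yields $+1$, while the factors of $\pi$ cancel, producing the stated identity.

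The only point of substance is the vanishing of the boundary integral in the second paragraph; the rest reduces to two applications of Rodrigues and a single integration by parts. The assumption $m\geq n$ in the hypothesis plays no role in this argument beyond ensuring $n\geq 1$ so that $\mathcal{Z}_{m,n-1}^{\gamma+1}$ is defined.
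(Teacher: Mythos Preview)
Your argument is correct and is substantially more direct than the paper's. The paper expands $\mathcal{Z}_{m,n}^{\gamma}$ into monomials and computes the Cauchy transform of each $\bar z^{\,m-j}z^{\,n-j}(1-|z|^{2})^{j}$ separately (Lemma~\ref{Lem:CauchyMonomials}, via polar coordinates, the incomplete beta integral and an Euler transformation), then collapses the resulting sum with a ${}_2F_1$ summation identity from Brychkov's handbook and a further connection formula to recognize $\mathcal{Z}_{m,n-1}^{\gamma+1}$. Your route replaces all of this special-function machinery with two applications of the Rodrigues formula and a single Cauchy--Pompeiu integration by parts; the crucial observation that $F=\partial_{w}^{m}\partial_{\bar w}^{\,n-1}\Phi$ vanishes on $\partial D$, because the lowest surviving power of $(1-|w|^{2})$ is $\gamma+1>0$, stands in for the entire hypergeometric computation. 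What the paper's approach buys is the intermediate Lemma~\ref{Lem:CauchyMonomials}, which may be of independent interest; what yours buys is a conceptual one-line proof that moreover does not use $m\ge n$ and hence already subsumes the case handled separately in the remark after the theorem. Two minor points: for $-1<\gamma<0$ the function $F$ is continuous on $\overline{D}$ but not $C^{1}$, so to be rigorous apply Cauchy--Pompeiu on $\{|w|\le r\}$ and let $r\to 1$ (the boundary integral tends to $0$ since $F|_{\partial D}=0$, and the area integral converges because $(1-|w|^{2})^{\gamma}$ is integrable); and your closing sentence is slightly misworded, since $m\ge n$ does not itself force $n\ge 1$ --- rather, once $n\ge 1$ is assumed so that the right-hand side makes sense, the further hypothesis $m\ge n$ is simply unused.
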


The proof of this theorem lies on the following key lemma.

\begin{lemma} \label{Lem:CauchyMonomials}
For every nonnegative integers $m,n$ and $j$ such that $m\geq n\geq j$, we have
\begin{align}\label{CauchyMonomials}
C_{\gamma}\left(\overline{z}^{m-j}z^{n-j}\left(1- |{z}|^{2}\right)^{j}\right)
&=  \dfrac{-z^{n} \overline{z}^{m+1}}{(m-j+1)}    \left(1-|z|^2\right)^{\gamma+1}
\\& \quad \times \left(\dfrac{1-|z|^2}{|z|^{2}}\right)^{j}
  {_2F_1}\left( \begin{array}{c} 1, \gamma+m +2\\ m-j+2\end{array}\bigg ||z|^2\right). \nonumber
\end{align}
\end{lemma}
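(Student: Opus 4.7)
The plan is to compute the Cauchy transform directly in polar coordinates. Write $w=\rho e^{i\theta}$ so that
$$[C_\gamma(f)](z)=\frac{1}{\pi}\int_0^1\!\!\int_0^{2\pi}\frac{\rho^{m+n-2j}e^{i(n-m)\theta}(1-\rho^2)^{\gamma+j}}{\rho e^{i\theta}-z}\,\rho\,d\theta\,d\rho.$$
To handle the kernel $1/(w-z)$, I would split the $\rho$-integral at $\rho=|z|$ and expand the kernel as a geometric series in each region: $\frac{1}{w-z}=\sum_{k\ge 0}z^k/w^{k+1}$ when $|w|>|z|$ and $\frac{1}{w-z}=-\sum_{k\ge 0}w^k/z^{k+1}$ when $|w|<|z|$. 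This turns the angular integral into a sum over $k$ of integrals of $e^{i(n-m\pm k\mp 1)\theta}$, each of which vanishes unless a specific resonance occurs.

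In the region $|w|<|z|$ the resonance selects $k=m-n$, which is a valid (nonnegative) index exactly because $m\ge n$. In the region $|w|>|z|$ the resonance would require $k=n-m-1<0$, which is excluded; hence this region contributes nothing. This is the only place where the hypothesis $m\ge n$ is used, and it is also the main organizational point I would want to get right. After the angular integration, the only surviving piece is
$$[C_\gamma(f)](z)=-\frac{1}{z^{m-n+1}}\int_0^{|z|^2}\!t^{m-j}(1-t)^{\gamma+j}\,dt,$$
obtained by the substitution $t=\rho^2$. Rewriting $1/z^{m-n+1}=\overline z^{m-n+1}/|z|^{2(m-n+1)}$ and using $\overline z^{m-n+1}|z|^{2n}=z^n\overline z^{m+1}$ matches the $z,\overline z$-prefactor of the claimed identity, up to the factor $|z|^{-2(m-j+1)}$ times the incomplete beta integral.

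The remaining task is therefore to identify
$$\int_0^{x}t^{m-j}(1-t)^{\gamma+j}\,dt=B_x(m-j+1,\gamma+j+1)$$
with the hypergeometric expression appearing on the right. Here I would invoke the standard identity for the incomplete beta function (a Pfaff transform of its basic ${_2F_1}$ representation),
$$B_x(a,b)=\frac{x^a(1-x)^b}{a}\,{_2F_1}\!\left(\begin{array}{c}1,\,a+b\\ a+1\end{array}\bigg|\,x\right),$$
with $a=m-j+1$, $b=\gamma+j+1$, $x=|z|^2$. Substituting back and collecting the powers of $|z|^2$ and $(1-|z|^2)$ yields the stated formula. The main obstacle is less any single step than the bookkeeping of exponents after splitting the region of integration; otherwise the argument is a routine contour/series computation combined with a classical special-function identity.
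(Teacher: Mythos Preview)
Your proof is correct and follows essentially the same route as the paper: polar coordinates, geometric-series expansion of the Cauchy kernel on the two regions $\rho<|z|$ and $\rho>|z|$ (the paper states the resulting angular integral as a two-case formula), reduction to the incomplete beta integral via $t=\rho^2$, and then the hypergeometric representation of $B_x(a,b)$ with $a=m-j+1$, $b=\gamma+j+1$. The only cosmetic difference is that the paper first writes $B_x(a,b)=\tfrac{x^a}{a}\,{_2F_1}(a,1-b;a+1;x)$ and then applies Euler's transformation, whereas you quote the transformed identity directly; note that the transformation in question is Euler's, not Pfaff's, but the identity you state is correct.
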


\begin{proof}
Note first that for every nonnegative integers $m,n$ such that $ m \geq n$ and fixed complex number $z\in D$
 and positive real number $r$ such that $0<r<1$, we have
\begin{align}\label{integraltheta}
\int_0^{2\pi} \dfrac{e^{i(n-m)\theta }}{re^{i\theta}-z} d\theta =
\left\{ \begin{array}{ll}
 -2\pi \dfrac{r^{m-n}}{z^{m-n+1}} &  \quad \mbox{if } 0<r<|z|\\
  0                               &  \quad \mbox{if } 0\leq |z|<r<1
\end{array}
\right.
.
\end{align}
This can be handled easily by expanding $\dfrac{1}{re^{i\theta}-z}$ as power series and next making use of the well-known fact that
$\int_0^{2\pi} e^{i(k-k')\theta } d\theta = 2\pi \delta_{k,k'}$ for every nonnegative integers $k,k'$.

To prove \eqref{CauchyMonomials}, we start from the definition of the Cauchy transform and make use of the polar coordinates $w=re^{i\theta}$ to get
\begin{align*}
C_{\gamma}\left(\overline{z}^{m-j}z^{n-j}\left(1- |{z}|^{2}\right)^{j}\right)
= \dfrac{1}{\pi} \int_0^1 r^{m+n-2j+1} (1-r^2)^{\gamma+j} \left(\int_0^{2\pi} \dfrac{e^{i(n-m)\theta }}{re^{i\theta}-z} d\theta\right)dr.
\end{align*}
Now, according to \eqref{integraltheta}, the previous integral reduces further to
\begin{align*}
C_{\gamma}\left(\overline{z}^{m-j}z^{n-j}\left(1-|{z}|^{2}\right)^{j}\right)
= \dfrac{-2}{z^{m-n+1}} \int_0^{|z|} r^{2(m-j)+1} (1-r^2)^{\gamma+j} dr.
\end{align*}
Whence, the change $t=r^2$ yields  
\begin{align}\label{Cauchy1}
C_{\gamma}\left(\overline{z}^{m-j}z^{n-j}\left(1-|{z}|^{2}\right)^{j}\right)
= \dfrac{-1}{z^{m-n+1}} \int_0^{|z|^2} t^{m-j} (1-t)^{\gamma+j} dt.
\end{align}
In the last equality, we recognize the well-known integral 
$$\int_{0}^{x}u^{a-1}(1-u)^{b-1}du=\dfrac{1}{a} x^{a} {_2F_1}\left( \begin{array}{c} a,1-b\\ a+1\end{array}\bigg |x\right),$$
that we can rewrite as
\begin{align}\label{Cauchy2}
\int_{0}^{x}u^{a-1}(1-u)^{b-1}du= \dfrac{1}{a} x^{a} \left(1-x\right)^{b} {_2F_1}\left( \begin{array}{c} 1,a+b\\ a+1\end{array}\bigg |x\right),
\end{align}
thanks to the Euler's transformation \cite[Theorem 21, p. 60]{Rain} 
$${_2F_1}\left( \begin{array}{c} a,b\\ c\end{array}\bigg |x\right)=\left(1-x\right)^{c-a-b}{_2F_1}\left( \begin{array}{c} c-a,c-b\\ c\end{array}\bigg |x\right).$$
From \eqref{Cauchy1} and \eqref{Cauchy2}, with $x=|z|^2$, $a=m-j+1$ and $b=\gamma+j+1$, we get
\begin{align*}
C_{\gamma}\left(\overline{z}^{m-j}z^{n-j}\left(1-|{z}|^{2}\right)^{j}\right)
= \dfrac{-z^{n-j} \overline{z}^{m-j+1} }{m-j+1}  \left(1-|z|^2\right)^{\gamma+j+1}
  {_2F_1}\left( \begin{array}{c} 1, \gamma+m +2\\ m+2-j\end{array}\bigg ||z|^2\right).
\end{align*}
This is exactly \eqref{CauchyMonomials}.
\end{proof}

Accordingly, we can prove Theorem \ref{Thm:CauchyZ}.

\begin{proof}[Proof of Theorem \ref{Thm:CauchyZ}]
Let $m\geq n$ and write the explicit expression of $\mathcal{Z}_{m,n}^{\gamma}(z,\overline{z})$ in the $z$ variable, to wit
$$
\mathcal{Z}_{m,n}^{\gamma}(z,\overline{z})=m!n!(\gamma+1)_{m+n}\sum_{j=0}^{n}
\dfrac{(-1)^{j} }{(\gamma+1)_{j}} \dfrac{\overline{z}^{m-j}}{(m-j)!}  \dfrac{z^{n-j}}{(n-j)!} \dfrac{\left(1-|{z}|^{2}\right)^{j}}{j!}.
$$
By linearity of the Cauchy transform, we need to compute  $C_{\gamma}\left(\overline{z}^{m-j}z^{n-j}\left(1-|{z}|^{2}\right)^{j}\right)$ which is given through Lemma \ref{Lem:CauchyMonomials} (see \eqref{CauchyMonomials}). Thus, we get
\begin{align*}
\left[C_{\gamma}(\mathcal{Z}_{m,n}^{\gamma}) \right](z)
&=  m!n! (\gamma+1)_{m+n}\sum_{j=0}^{n}\dfrac{(-1)^{j} }{ (m-i)!(n-j)! (\gamma+1)_{j} j!}
 C_{\gamma}\left({\overline{z}^{m-j}z^{n-j}} \left(1-|{z}|^{2}\right)^{j}\right)
\\&= - (\gamma+1)_{m+n} z^{n} \overline{z}^{m+1} \left(1-|z|^2\right)^{\gamma+1}
    \\& \quad \times  n! \sum_{j=0}^{n}    \dfrac{(-1)^{j}  m! }{ (m+1-j)! (\gamma+1)_{j}}
    \dfrac{\left(\dfrac{1-|z|^2}{|z|^{2}}\right)^{j}}{j!(n-j)!}
   {_2F_1}\left( \begin{array}{c}  1, \gamma+m +2\\ m+2-j\end{array}\bigg ||z|^2\right)
 \\&=  - (\gamma+1)_{m+n} \dfrac{z^{n} \overline{z}^{m+1}}{m+1} \left(1-|z|^2\right)^{\gamma+1}
    \\& \quad \times  n! \sum_{j=0}^{n}    \dfrac{(-(m+1))_j }{(\gamma+1)_{j} }
    \dfrac{\left(\dfrac{1-|z|^2}{|z|^{2}}\right)^{j}}{j!(n-j)!}
   {_2F_1}\left( \begin{array}{c}  1, \gamma+m +2\\ m+2-j\end{array}\bigg ||z|^2\right).
\end{align*}
The last equality follows making use of the fact that $(-1)^{j}(m+1)!= (-(m+1))_j (m+1-j)!$. Next, by means of \cite[p.415]{Brichkov2008}
$$
n! \sum_{j=0}^{n}    \dfrac{(1-c)_{j}}{(b-c+1)_{j}} \dfrac{\left(\dfrac{1-x}{x}\right)^{j}}{j!(n-j)!}
{_2F_1}\left( \begin{array}{c} a,b \\ c-j\end{array}\bigg |x\right)
=\dfrac{(1-c)_{n}}{(b-c+1)_{n} } x^{-n}{_2F_1}\left( \begin{array}{c} a-n,b \\ c-n\end{array}\bigg |x\right),
$$
with $a= 1$, $b= \gamma+m+2$, $c=m+2$ and $x= |z|^2$, we obtain
\begin{align*}
\left[C_{\gamma}(\mathcal{Z}_{m,n}^{\gamma}) \right](z)
&=  \dfrac{- (\gamma+1)_{m+n} (-m-1)_{n}}{(\gamma +1)_{n}  (m+1)}
   \overline{z}^{m+1-n} \left(1-|z|^2\right)^{\gamma+1}
    {_2F_1}\left( \begin{array}{c} 1-n,\gamma+m+2 \\ m-n+2\end{array}\bigg ||z|^2\right).
\end{align*}
Now, by applying \cite[Eq. (15.8.6)]{NIST}  
$$ {_2F_1}\left( \begin{array}{c} -k,b \\ c\end{array}\bigg |x\right)
=\dfrac{(b)_{k}}{(c)_{k}} (-x)^{k}{_2F_1}\left( \begin{array}{c} -k,1-c-k \\ 1-b-k\end{array}\bigg |\frac{1}{x}\right),$$
with $k=n-1$, $b= \gamma+m+2$, $c=m-n+2$ and $x=|z|^2$, it follows
\begin{align*}
\left[C_{\gamma}(\mathcal{Z}_{m,n}^{\gamma}) \right](z)
&=  \dfrac{(-1)^{n} (\gamma+1)_{m+n} (\gamma+m+2)_{n-1} (-m-1)_{n}} {(\gamma +1)_{n} (m-n+2)_{n-1}(m+1)}
   \\& \quad \times  z^{n-1} \overline{z}^{m} \left(1-|z|^2\right)^{\gamma+1}
    {_2F_1}\left( \begin{array}{c} -(n-1), - m \\ -(\gamma+1)-m-(n-1) \end{array}\bigg |\frac{1}{|z|^2}\right).
\end{align*}
According to \eqref{ZernikeGauss2}, we can write
\begin{align*}
\left[C_{\gamma}(\mathcal{Z}_{m,n}^{\gamma}) \right](z)
&=  \dfrac{(-1)^{n} (\gamma+1)_{m+n} (\gamma+m+2)_{n-1} (-m-1)_{n}} {(\gamma +1)_{n} (m-n+2)_{n-1}(m+1)} \left(1-|z|^2\right)^{\gamma+1}
   \\& \frac {(\gamma+2)_{m}(\gamma+2)_{n-1}}{\left((\gamma+2)_{m+n-1}\right)^2}\mathcal{Z}_{m,n-1}^{\gamma+1}(z,\bar{z}),
\end{align*}
which reduces further to
\begin{align*}
\left[C_{\gamma}(\mathcal{Z}_{m,n}^{\gamma}) \right](z)
&=   \left(1-|z|^2\right)^{\gamma+1} \mathcal{Z}_{m,n-1}^{\gamma+1}(z,\bar{z})
\end{align*}
thanks to the followings facts
$(\gamma+m+2)_{n-1} = (\gamma+1)_{m+n} / (\gamma+1)_{m+1} $, $(-1)^{n} (-m-1)_{n}= (m+1) (m+2-n)_{n-1}$ and $(a)_{1+k}= a (a+1)_{k}$.
\end{proof}

\begin{remark}
The case $n\geq m$ follows easily from the previous one. Indeed, by definition of the Cauchy transform $C_{\gamma}$ and the fact $\overline{\mathcal{Z}_{j,k}^{\gamma}(w,\overline{w})} = \mathcal{Z}_{k,j}^{\gamma}(w,\overline{w})$, we get
\begin{align*}
\left[C_{\gamma}(\mathcal{Z}_{m,n}^{\gamma}) \right](z) =\overline{\left[C_{\gamma}(\overline{\mathcal{Z}_{m,n}^{\gamma}}) \right](\overline{z})}
=  \overline{\left[C_{\gamma}(\mathcal{Z}_{n,m}^{\gamma}) \right](\overline{z})}.
 \end{align*}
Therefore, from Theorem \ref{Thm:CauchyZ}, we obtain
\begin{align*}
\left[C_{\gamma}(\mathcal{Z}_{m,n}^{\gamma}) \right](z)
=  \overline{ \left(1-|z|^2\right)^{\gamma+1} \mathcal{Z}_{n,m-1}^{\gamma+1} (\overline{z},z)}
= \left(1-|z|^2\right)^{\gamma+1} \mathcal{Z}_{n,m-1}^{\gamma+1}(z,\bar{z}),
 \end{align*}
 since $\overline{\mathcal{Z}_{j,k}^{\gamma} (z,\overline{z})} = \mathcal{Z}_{j,k}^{\gamma}(\overline{z},z)$.
\end{remark}

\end{document}